\theoremstyle{plain}
\newtheorem{theorem}{Theorem}
\numberwithin{equation}{section}
\begin{document}

\title {Gauss $q$-ed from Heine cubed}

\date{}

\author[P.L. Robinson]{P.L. Robinson}

\address{Department of Mathematics \\ University of Florida \\ Gainesville FL 32611  USA }

\email[]{paulr@ufl.edu}

\subjclass{} \keywords{}

\begin{abstract}

We consider $q$-analytic derivations of the $q$-Gauss summation formula for a $\, _2\phi _1$ that respect the symmetry in its upper parameters. 

\end{abstract}

\maketitle

Among the many original results in the pioneering paper [1847] of Heine is his $q$-series analogue for the Gauss summation of a hypergeometric series. Recall that the $q$-series analogue $\, _2\phi _1$ of the hypergeometric series $_2F_1$ is defined by 
\setlength\arraycolsep{1pt}
$$\, _2\phi _1 \left(\begin{matrix}a& &b& \\&c&
\end{matrix}; q ; z\right) = \sum_{n=0}^{\infty} \frac{(a; q)_n (b; q)_n}{(c; q)_n} \frac{z^n}{(q; q)_n}\,.$$
Here and in what follows, it is assumed that $|q| < 1$ and that the lower parameter $c$ is not equal to $q^{- N}$ for any integer $N \geqslant 0$. In these terms, the $q$-Gauss summation formula of Heine asserts that if $|c| < |ab|$ then 
$$\, _2\phi _1 \left(\begin{matrix}a& &b& \\&c&
\end{matrix}; q; \frac{c}{ab}\right) = \frac{(c/a; q)_{\infty} (c/b; q)_{\infty}}{(c; q)_{\infty} (c/ab; q)_{\infty}}\, .$$
This appears as Formula 80 on page 307 of [1847]; naturally, it is there expressed after the fashion of the time. 

\medbreak 

The traditional proof of the $q$-Gauss summation formula derives it as a consequence of the fundamental Heine transformation for $_2\phi_1$: this asserts that if $z$ and $b$ lie (along with $q$) in the open unit disc then 
\setlength\arraycolsep{1pt}
$$\, _2\phi _1 \left(\begin{matrix}a& &b& \\&c&
\end{matrix}; q ; z\right) = \frac{(a z; q)_{\infty} (b; q)_{\infty}}{(z; q)_{\infty} (c; q)_{\infty}} \, \, _2\phi _1 \left(\begin{matrix}c/b& &z& \\&a z&
\end{matrix}; q ; b\right).$$
This transformation of Heine is central to [1847]: in fact, it appears on page 306 as Formula 79 in the form 
$$\frac{(c; q)_{\infty}}{(b; q)_{\infty}} \, \, _2\phi _1 \left(\begin{matrix}a& &b& \\&c&
\end{matrix}; q ; z\right) = \frac{(a z; q)_{\infty}}{(z; q)_{\infty}} \, \, _2\phi _1 \left(\begin{matrix}c/b& &z& \\&a z&
\end{matrix}; q ; b\right).$$
Heine establishes the $q$-Gauss summation formula by evaluating his transformation formula at $z = c/ab$ and invoking the $q$-binomial series, the latter of which appears as Formula 74 on page 303 of [1847]. 

\medbreak 

This original proof by Heine is one of the few known proofs of the $q$-Gauss summation formula. Ramanujan sketched an approach based on the $q$-binomial series in one of the manuscripts included in [1988] along with his `Lost Notebook'. Bailey [1935] offered a less elementary approach based on the $q$-Dougall identity of Jackson for an $\, _8\phi _7$. None of these $q$-analytic derivations of the $q$-Gauss summation formula respects the symmetry in its upper parameters $a$ and $b$: this claim is at once clear in the case of Ramanujan's approach; in the case of Bailey's approach, the validity of the claim becomes evident upon closer inspection. Our aim in this short note is to consider $q$-analytic derivations of the $q$-Gauss summation formula that fully respect its symmetry. Section 1 reviews the Heine transformation formula and its group of symmetries; Section 2 addresses the task of developing a symmetric derivation of the $q$-Gauss summation formula. 

\medbreak 

\section*{Heine symmetries}

\medbreak 

We begin with a mild notational simplification. As $\, _2\phi _1$ is the only $q$-hypergeometric series that we shall consider, we drop `2' and `1' as subscripts, transferring the base $q$ to one of the vacancies. Thus, we define 
\setlength\arraycolsep{1pt}
$$\phi _q \left(\begin{matrix}a& &b& \\&c&
\end{matrix}; z\right) = \sum_{n=0}^{\infty} \frac{(a; q)_n (b; q)_n}{(c; q)_n} \frac{z^n}{(q; q)_n}$$
assuming throughout that $q$ lies in the open unit disc and that $c q^N \neq 1$ for each integer $N \geqslant 0$.

\medbreak 

With this understanding, the fundamental transformation of Heine [1847] asserts that if also $b$ and $z$ lie in the open unit disc then 
\setlength\arraycolsep{1pt}
$$\phi _q \left(\begin{matrix}a& &b& \\&c&
\end{matrix}; z\right) = \frac{(b; q)_{\infty} (a z; q)_{\infty}}{(c; q)_{\infty} (z; q)_{\infty}} \, \phi _q \left(\begin{matrix}c/b& &z& \\&a z&
\end{matrix}; b\right).$$
This is the way in which the transformation is presented according to custom. Our purposes will be better served by exploiting the symmetry of $\phi_q$ in its upper parameters: thus, we shall cast the Heine transformation as 
$$\phi _q \left(\begin{matrix}a& &b& \\&c&
\end{matrix}; z\right) = \frac{(b; q)_{\infty} (a z; q)_{\infty}}{(c; q)_{\infty} (z; q)_{\infty}} \, \phi _q \left(\begin{matrix}z& &c/b& \\&a z&
\end{matrix}; b\right).$$
Of course, the case $b = 0$ calls for separate treatment, which we leave to the reader. 

\medbreak 

For organizational reasons, we introduce the parameter-variable space 
$$\mathbb{S} = \Big\{ \left(\begin{matrix}a& &b& \\&c&
\end{matrix}; z\right) : a, b, c, z \in \mathbb{C} \setminus \{0\} \Big\}.$$ 
On $\mathbb{S}$ we define the operator 
$$U : \mathbb{S} \to \mathbb{S} : \left(\begin{matrix}a& &b& \\&c&
\end{matrix}; z\right) \mapsto \left(\begin{matrix}b& &a& \\&c&
\end{matrix}; z\right)$$ 
that interchanges the upper parameters. Symmetry of $\phi_q$ in its upper parameters now reads 
$$\phi_q \circ U = \phi_q.$$ 
On $\mathbb{S}$ we also introduce the operator 
$$H: \mathbb{S} \to \mathbb{S} : \left(\begin{matrix}a& &b& \\&c&
\end{matrix}; z\right) \mapsto \left(\begin{matrix}z& &c/b& \\&az&
\end{matrix}; b\right)$$ 
along with the `automorphy factor' 
$$h : \mathbb{S} \to \mathbb{C} : \left(\begin{matrix}a& &b& \\&c&
\end{matrix}; z\right) \mapsto \frac{(c; q)_{\infty} (z; q)_{\infty}}{(b; q)_{\infty} (a z; q)_{\infty}}\, .$$ 
The fundamental Heine transformation now asserts that if 
$$s = \left(\begin{matrix}a& &b& \\&c&
\end{matrix}; z\right) \in \mathbb{S}$$
then 
$$\phi_q (H s) = h(s) \phi_q (s)$$ 
where it is assumed that $b$ and $z$ lie in the open unit disc. Note that the composite $U H$ corresponds to the customary way of presenting the Heine transformation. 

\medbreak 

\begin{theorem} \label{group}
The operators $U$ and $UH$ have period two, while $H$ has period six. 
\end{theorem}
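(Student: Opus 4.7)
The argument is a direct calculation. The claim for $U$ is self-evident: $U$ transposes the two upper parameters and fixes $c$ and $z$, so $U^2 = \mathrm{id}$ is immediate from the definition.

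For $(UH)^2$, the plan is to compute $(UH)(a,b,c;z)$ in a single step and then iterate. Since $H(a,b,c;z) = (z, c/b, az; b)$, applying $U$ gives $(UH)(a,b,c;z) = (c/b, z, az; b)$. Applying $UH$ a second time to this tuple, after the cancellations $az/z = a$ and $(c/b)\cdot b = c$, returns $(a,b,c;z)$; hence $(UH)^2 = \mathrm{id}$.

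The substantive claim is that $H$ has period exactly six. Here the plan is to iterate the rule $H(a', b', c'; z') = (z', c'/b', a'z'; b')$, tabulating $H^k(a,b,c;z)$ for $k = 1, \ldots, 6$ in turn. Each step is a mechanical substitution but requires care with nested ratios; for instance, passing from $H^2 = (b, abz/c, bz; c/b)$ to $H^3$ hinges on the simplification $bz/(abz/c) = c/a$, yielding $H^3 = (c/b, c/a, c; abz/c)$. Continuing two more steps gives $H^4 = (abz/c, a, az; c/a)$ and $H^5 = (c/a, z, bz; a)$, and the sixth iterate then collapses to the original tuple $(a,b,c;z)$. For $1 \leq k \leq 5$, inspection of any single coordinate of $H^k(a,b,c;z)$ shows that it differs from the corresponding coordinate of $(a,b,c;z)$ for generic parameters, so no smaller period is possible.

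The only real obstacle is the bookkeeping, particularly the careful handling of the nested ratios that appear in passing from $H^2$ to $H^3$ and from $H^4$ to $H^5$; no conceptual difficulty arises. Once the order of $H$ has been pinned down, one sees incidentally that $U$ and $H$ together generate the dihedral group of order $12$, although that observation is not needed for the theorem as stated.
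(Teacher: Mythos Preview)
Your proof is correct and follows essentially the same approach as the paper: direct verification by computing the iterates of $H$. The only difference is that the paper records $H^2$ and $H^3$ explicitly and then observes that $H^3$ is an involution (whence $H^6 = \mathrm{id}$), sparing itself the explicit computation of $H^4$, $H^5$, and $H^6$ that you carry out.
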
 

\begin{proof} 
That $U$ and $UH$ have period two is a matter of simple verification. The verification that $H$ has period six is likewise simple; for future reference, we record the iterates 
$$H^2 \left(\begin{matrix}a& &b& \\&c&
\end{matrix}; z\right) = \left(\begin{matrix}b& &abz/c& \\&bz&
\end{matrix}; c/b\right)$$
$$H^3 \left(\begin{matrix}a& &b& \\&c&
\end{matrix}; z\right) = \left(\begin{matrix}c/b& &c/a& \\&c&
\end{matrix}; abz/c\right)$$
and note that $H^3$ is an involution. 
\end{proof} 

\medbreak 

It follows at once that the group generated by $U$ and the Heine operator $H$ is dihedral of order twelve, the involution $H^3$ generating its centre. The value of this symmetry group was noted by Rogers, who exploited it in his study [1893] of the Heine transformation. 

\medbreak 

The iterates of the Heine operator have readily calculable effects on $\phi_q$. If 
$$s = \left(\begin{matrix}a& &b& \\&c&
\end{matrix}; z\right) \in \mathbb{S}$$
then 
$$\phi_q (H^2 s) = \phi_q (H H s) = h(H s) \phi_q (H s) = h(H s) h(s) \phi_q (s)$$ 
where 
$$h(s) = \frac{(c; q)_{\infty} (z; q)_{\infty}}{(b; q)_{\infty} (a z; q)_{\infty}}$$
and 
$$h(H s) = \frac{(az; q)_{\infty} (b; q)_{\infty}}{(c/b; q)_{\infty} (b z; q)_{\infty}}$$
so that 
$$\phi_q \left(\begin{matrix}b& &abz/c& \\&bz&
\end{matrix}; c/b\right) = \frac{(c; q)_{\infty} (z; q)_{\infty}}{(c/b; q)_{\infty} (b z; q)_{\infty}} \, \phi_q \left(\begin{matrix}a& &b& \\&c&
\end{matrix}; z\right)$$
in view of the formula for $H^2$ recorded in the proof of Theorem \ref{group}. Here $c/b$ and $z$ lie in the open unit disc, the intermediate requirement $|b| < 1$ being removed by analytic continuation. The effect of the cube $H^3$ we record as follows. 

\medbreak 

\begin{theorem} \label{cube}
If $z$ and $abz/c$ lie in the open unit disc then 
$$(z; q)_{\infty} \, \phi_q \left(\begin{matrix}a& &b& \\&c&
\end{matrix}; z\right) = (abz/c; q)_{\infty} \, \phi_q \left(\begin{matrix}c/b& &c/a& \\&c&
\end{matrix}; abz/c\right).$$ 
\end{theorem}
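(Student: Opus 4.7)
The plan is to simply iterate the automorphy relation $\phi_q(Hs) = h(s)\phi_q(s)$ one more time past the calculation of $\phi_q(H^2 s)$ already carried out before the statement, and then read off the effect of $H^3$ on $\phi_q$ using the explicit formula for $H^3 s$ recorded in the proof of Theorem \ref{group}.

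Concretely, starting from
$$\phi_q(H^3 s) = h(H^2 s)\, h(Hs)\, h(s)\, \phi_q(s),$$
I would compute $h(H^2 s)$ by substituting the iterate
$$H^2 s = \left(\begin{matrix}b& &abz/c& \\&bz&\end{matrix};\, c/b\right)$$
into the definition of $h$; this yields
$$h(H^2 s) = \frac{(bz; q)_\infty (c/b; q)_\infty}{(abz/c; q)_\infty (c; q)_\infty}.$$
Multiplying by the already computed $h(Hs)$ and $h(s)$, the factors $(c;q)_\infty$, $(b;q)_\infty$, $(az;q)_\infty$, $(bz;q)_\infty$ and $(c/b;q)_\infty$ cancel in pairs, leaving
$$h(H^2 s)\, h(Hs)\, h(s) = \frac{(z; q)_\infty}{(abz/c; q)_\infty}.$$
Substituting the formula for $H^3 s$ into the left-hand side of $\phi_q(H^3 s) = h(H^2 s) h(Hs) h(s)\phi_q(s)$ then yields the claimed identity after clearing denominators.

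The only real issue is convergence. Each application of Heine's transformation along the chain $s \mapsto Hs \mapsto H^2 s \mapsto H^3 s$ carries its own hypothesis that the `$b$' and `$z$' entries lie in the open unit disc, and these intermediate hypotheses involve $b$, $c/b$, $bz$, $abz/c$ and so on. I expect this to be the main obstacle, but as already noted in the discussion preceding the theorem, the intermediate requirements can be lifted by analytic continuation (both sides being meromorphic in the parameters); in the end only the two conditions $|z| < 1$ and $|abz/c| < 1$ needed for convergence of the two $\phi_q$ series in the stated identity must be retained.
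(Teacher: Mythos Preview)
Your proposal is correct and is exactly the approach the paper has in mind: the paper's proof simply says to ``argue essentially as above, using the formulae recorded in the proof of Theorem~\ref{group}'' and to lift the catalytic intermediate convergence requirements by analytic continuation. You have faithfully spelled out those details, including the correct computation of $h(H^2 s)$ and the telescoping cancellation that leaves $(z;q)_\infty/(abz/c;q)_\infty$.
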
 

\begin{proof} 
Argue essentially as above, using the formulae recorded in the proof of Theorem \ref{group}. Again, analytic continuation authorizes the lifting of catalytic intermediate requirements. 
\end{proof} 

\medbreak 

The cube of the Heine transformation is sometimes called the $q$-Euler transformation. Note that it is symmetric in the upper parameters $a$ and $b$; in this, it agrees with the $q$-Gauss summation formula. 

\medbreak 

\section*{Gauss symmetrically}

\medbreak 

We now turn to the $q$-Gauss summation formula, aiming at a derivation that fully respects its symmetry in the upper parameters. 

\medbreak 

For purposes of comparison we first recall the traditional derivation, presented in [1847]. This derivation rests on the $q$-binomial series, according to which 
$$\sum_{n = 0}^{\infty} \frac{(u; q)_n}{(q; q)_n} z^n = \frac{(u z; q)_{\infty}}{(z; q)_{\infty}}$$
if $z$ (along with $q$ as usual) lies in the open unit disc. In the Heine transformation 
$$\phi _q \left(\begin{matrix}a& &b& \\&c&
\end{matrix}; z\right) = \frac{(b; q)_{\infty} (a z; q)_{\infty}}{(c; q)_{\infty} (z; q)_{\infty}} \, \phi _q \left(\begin{matrix}z& &c/b& \\&a z&
\end{matrix}; b\right)$$
put $z = c/ab$: the right-hand side then becomes 
$$\frac{(b; q)_{\infty} (c/b; q)_{\infty}}{(c; q)_{\infty} (c/ab; q)_{\infty}} \, \phi _q \left(\begin{matrix}c/ab& &c/b& \\&c/b&
\end{matrix}; b\right)$$
in which the $q$-binomial series evaluates the $\phi_q$ factor as 
$$\frac{((c/ab)b; q)_{\infty}}{(b; q)_{\infty}} = \frac{(c/a; q)_{\infty}}{(b; q)_{\infty}}$$
whence follows the $q$-Gauss summation formula 
$$\phi _q \left(\begin{matrix}a& &b& \\&c&
\end{matrix}; \frac{c}{ab}\right) = \frac{(c/a; q)_{\infty} (c/b; q)_{\infty}}{(c; q)_{\infty} (c/ab; q)_{\infty}}\, .$$
Alternatively, the square of the Heine transformation offers a more interesting derivation. In 
$$\phi_q \left(\begin{matrix}a& &b& \\&c&
\end{matrix}; z\right) = \frac{(c/b; q)_{\infty} (b z; q)_{\infty}}{(c; q)_{\infty} (z; q)_{\infty}} \, \phi_q \left(\begin{matrix}b& &abz/c& \\&bz&
\end{matrix}; c/b\right)$$
again put $z = c/ab$: the $q$-Gauss summation formula follows at once, because 
$$\phi_q  \left(\begin{matrix}b& &1& \\&c/a&
\end{matrix}; c/b\right) = 1$$
by virtue of the fact that $(1; q)_n$ is the Kronecker delta $\delta_{0 n}$. 

\medbreak 

Of course, neither of these derivations treats the upper parameters even-handedly. The cube of the Heine transformation shows more promise, as it is symmetric in the upper parameters. In order to apply `Heine cubed' it is convenient to record the following elementary fact, in the statement of which we continue to suppress the underlying assumption $|q| < 1$. 

\medbreak 

\begin{theorem} \label{Abel}
$$\lim_{z \uparrow 1}  \, (z; q)_{\infty} \, \phi_q \left(\begin{matrix}a& &b& \\&c&
\end{matrix}; z\right) = \frac{(a; q)_{\infty} (b; q)_{\infty}}{(c; q)_{\infty}}\, .$$ 
\end{theorem}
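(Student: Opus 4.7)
The labelling of the theorem advertises the strategy: this is an Abel-type summation statement in $q$-hypergeometric disguise. My plan is to reduce it to the classical fact that $(1-z)\sum_{n\geqslant 0}c_n z^n \to L$ as $z \uparrow 1$ whenever the coefficients satisfy $c_n \to L$.

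First, I would factor the prefactor as $(z;q)_{\infty} = (1-z)(qz;q)_{\infty}$. The second factor is continuous at $z=1$ with limiting value $(q;q)_{\infty}$, so matters reduce to establishing
$$\lim_{z \uparrow 1}(1-z)\sum_{n=0}^{\infty} c_n z^n = \frac{(a;q)_{\infty}(b;q)_{\infty}}{(c;q)_{\infty}(q;q)_{\infty}}, \qquad c_n := \frac{(a;q)_n (b;q)_n}{(c;q)_n (q;q)_n}.$$
Writing each finite $q$-Pochhammer as a quotient of infinite ones via $(x;q)_n = (x;q)_{\infty}/(xq^n;q)_{\infty}$ and invoking $|q|<1$ so that $xq^n \to 0$ and $(xq^n;q)_{\infty} \to 1$, one sees that $c_n$ indeed tends to the constant $L = (a;q)_{\infty}(b;q)_{\infty}/((c;q)_{\infty}(q;q)_{\infty})$ that appears on the right-hand side.

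The remaining task is to apply the Abel-type lemma. Abel summation rearranges
$$(1-z)\sum_{n=0}^{\infty} c_n z^n = c_0 + \sum_{n=1}^{\infty}(c_n - c_{n-1})z^n;$$
the partial sums of $\sum(c_n - c_{n-1})$ telescope to $c_N - c_0 \to L - c_0$, so Abel's continuity theorem applied to this convergent series delivers $(1-z)\sum c_n z^n \to L$ as $z \uparrow 1$, whence the theorem. The only step requiring any real care is the invocation of Abel's continuity theorem via the telescoping rearrangement; the $q$-Pochhammer bookkeeping is otherwise entirely routine once $|q|<1$ is in hand.
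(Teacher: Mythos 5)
Your proof is correct and follows essentially the same route as the paper: factor $(z;q)_{\infty}=(1-z)(zq;q)_{\infty}$, let the second factor tend to $(q;q)_{\infty}$, and apply the Abel limit theorem to $(1-z)\sum_{n\geqslant 0}c_n z^n$ with $c_n=\frac{(a;q)_n(b;q)_n}{(c;q)_n(q;q)_n}\to\frac{(a;q)_{\infty}(b;q)_{\infty}}{(c;q)_{\infty}(q;q)_{\infty}}$. The only difference is that you spell out the telescoping reduction to Abel's continuity theorem, which the paper leaves implicit in the phrase ``simply apply the Abel limit theorem.''
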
 

\begin{proof} 
Simply apply the Abel limit theorem to 
$$ (z; q)_{\infty} \, \phi_q \left(\begin{matrix}a& &b& \\&c&
\end{matrix}; z\right) = (z q; q)_{\infty} \cdot (1 - z) \sum_{n = 0}^{\infty} \frac{(a; q)_n (b; q)_n}{(q; q)_n (c; q)_n} \, z^n .$$
\end{proof} 

\medbreak 

Here, the limit may instead be taken within a Stolz sector, as usual. Of course, this theorem hints at the notion of Abel summability for possibly divergent $q$-series. 

\medbreak 

After this preparation, the $q$-Gauss summation formula may be derived as follows. 

\medbreak 

\begin{theorem} \label{Gauss}
If $|q| < 1$ and $|c| < | a b |$ then 
$$\phi _q \left(\begin{matrix}a& &b& \\&c&
\end{matrix}; \frac{c}{ab}\right) = \frac{(c/a; q)_{\infty} (c/b; q)_{\infty}}{(c; q)_{\infty} (c/ab; q)_{\infty}}\, .$$
\end{theorem}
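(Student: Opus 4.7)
The plan is to apply the cube of the Heine transformation (Theorem \ref{cube}) — which is already symmetric in the upper parameters — and rescue the formally singular substitution $z = c/ab$ by passing to a limit, handled cleanly by the Abel-type lemma (Theorem \ref{Abel}).

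Concretely, introduce a real parameter $t \in (0, 1)$ and set $z = t c/ab$. Since the hypothesis $|c| < |ab|$ gives $|c/ab| < 1$, both $z$ and $abz/c = t$ lie in the open unit disc, so Theorem \ref{cube} applies and reads
$$(t c/ab; q)_{\infty} \, \phi_q \left(\begin{matrix}a& &b& \\&c& \end{matrix}; \frac{t c}{ab}\right) = (t; q)_{\infty} \, \phi_q \left(\begin{matrix}c/b& &c/a& \\&c& \end{matrix}; t\right).$$
Now let $t \uparrow 1$. On the left, the prefactor $(tc/ab; q)_{\infty}$ is continuous and tends to $(c/ab; q)_{\infty}$, while the $\phi_q$ series, being analytic on $|z| < 1$ and $|c/ab| < 1$ lying strictly inside that disc, is continuous at $z = c/ab$. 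On the right, Theorem \ref{Abel} applied to the parameters $(c/b, c/a; c)$ supplies the limit $(c/b; q)_{\infty} (c/a; q)_{\infty} / (c; q)_{\infty}$.

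Equating the two limits yields
$$(c/ab; q)_{\infty} \, \phi_q \left(\begin{matrix}a& &b& \\&c& \end{matrix}; \frac{c}{ab}\right) = \frac{(c/b; q)_{\infty} (c/a; q)_{\infty}}{(c; q)_{\infty}},$$
and the $q$-Gauss formula follows upon dividing by $(c/ab; q)_{\infty}$. The derivation is manifestly symmetric in $a$ and $b$: Theorem \ref{cube} produces the parameter pair $(c/b, c/a)$, which is simply interchanged under $a \leftrightarrow b$, and the upper parameters of $\phi_q$ commute.

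The only real obstacle is conceptual rather than computational. Direct substitution $z = c/ab$ in Theorem \ref{cube} pushes the transformed argument $abz/c$ to the boundary point $1$, where $\phi_q$ need not converge as a series. Theorem \ref{Abel} is precisely the tool fashioned to overcome this, and the radial approach $z = tc/ab$ with $t \uparrow 1$ is the natural path: it keeps $z$ safely inside $|z| < 1$ while driving the transformed variable to $1$ along a Stolz — indeed radial — sector, legitimizing the use of the Abel limit.
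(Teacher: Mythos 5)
Your proposal is correct and follows essentially the same route as the paper: the substitution $z = tc/ab$ with $t \uparrow 1$ is exactly the paper's rewriting of Theorem \ref{cube} in the form $(cz/ab; q)_{\infty}\,\phi_q(a,b;c;cz/ab) = (z;q)_{\infty}\,\phi_q(c/b,c/a;c;z)$ followed by letting $z \uparrow 1$, with Theorem \ref{Abel} supplying the right-hand limit. The only difference is cosmetic (your explicit radial parameter $t$ versus the paper's relabelled variable), so no further comment is needed.
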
 

\begin{proof} 
We start from the $q$-Euler transformation of Theorem \ref{cube} in the equivalent form 
$$(cz/ab; q)_{\infty} \, \phi_q \left(\begin{matrix}a& &b& \\&c&
\end{matrix}; cz/ab\right) = (z; q)_{\infty} \, \phi_q \left(\begin{matrix}c/b& &c/a& \\&c&
\end{matrix}; z\right)$$
where $q$, $z$ and $cz/ab$ lie in the open unit disc. Now let $z \uparrow 1$: the left-hand side plainly has limit 
$$(c/ab; q)_{\infty} \, \phi_q \left(\begin{matrix}a& &b& \\&c&
\end{matrix}; c/ab\right)$$
while the right-hand side has limit 
$$\frac{(c/b; q)_{\infty} (c/a; q)_{\infty}}{(c; q)_{\infty}}$$
on account of Theorem \ref{Abel}. 
\end{proof} 

\medbreak 

A purist will rightly object that the `symmetry' of this approach to the $q$-Gauss summation formula is undermined somewhat by the circumstance that, whereas the $q$-Euler transformation is symmetric in $a$ and $b$, the Heine transformation from which it is derived as the cube is not. This objection can be countered by an alternative derivation of the $q$-Euler transformation that handles the upper parameters $a$ and $b$ on equal terms, avoiding use of the Heine transformation. Such a derivation is made possible by the $q$-Pfaff-Saalsch\"utz identity. In his eight-page eighth chapter, Bailey [1935] deduces the $q$-Euler transformation and the $q$-Gauss summation formula from the $q$-Pfaff-Saalsch\"utz identity. However, his derivation of the $q$-Pfaff-Saalsch\"utz identity itself is perhaps not quite elementary, being based on the $q$-Dougall identity of Jackson for an $\, _8\phi _7$; more to the point, his proof of the $q$-Dougall identity involves a breaking of symmetry, in that one of the upper parameters (there called $b$ and $c$) is singled out for special handling. The attractive recent introduction to $q$-analysis [2020] by Johnson places us within reach of thoroughly symmetric $q$-analytic derivations of the $q$-Gauss summation formula. In Section 5.7 Johnson passes from the $q$-Euler transformation to the $q$-Pfaff-Saalsch\"utz identity by a careful comparison of coefficients. On the one hand, reconstitution effects a passage in the opposite direction, yielding a symmetric derivation of the $q$-Euler transformation from the $q$-Pfaff-Saalsch\"utz identity; on the other hand, the $q$-Pfaff-Saalsch\"utz identity is a finite version of the $q$-Gauss summation formula. Last but not least, the final exercise in Section 5.7 establishes the $q$-Pfaff-Saalsch\"utz identity from first principles, while simultaneously treating the upper parameters symmetrically. 

\bigbreak

\begin{center} 
{\small R}{\footnotesize EFERENCES}
\end{center} 
\medbreak 

[1847] E. Heine, {\it Untersuchungen \"uber die Reihe}, Journal f\"ur die reine und angewandte Mathematik {\bf 34} 285-328. 

\medbreak 

[1893] L.J. Rogers, {\it On a Three-fold Symmetry in the Elements of Heine's series}, Proceedings of the London Mathematical Society {\bf 24} 171-179. 

\medbreak 

[1935] W.N. Bailey, {\it Generalized Hypergeometric Series}, Cambridge Tracts in Mathematics and Mathematical Physics {\bf 32}, Cambridge University Press. 

\medbreak 

[1988] S. Ramanujan, {\it The Lost Notebook and Other Unpublished Papers}, Narosa, New Delhi. 

\medbreak 

[2020] W.P. Johnson, {\it An Introduction to $q$-analysis}, American Mathematical Society, Providence, Rhode Island.

\end{document}